\definecolor{refkey}{gray}{.75}
\definecolor{labelkey}{gray}{.2}
\newtheorem{theorem}{Theorem}[section]
\newtheorem{proposition}[theorem]{Proposition}
\newtheorem{lemma}[theorem]{Lemma}
\newtheorem{corollary}[theorem]{Corollary}
\theoremstyle{definition}
\theoremstyle{remark}
\newcommand{\overbar}[1]{\mkern 1.5mu\overline{\mkern-1.5mu#1\mkern-1.5mu}\mkern 1.5mu}
\newcommand{\EE}{\mathbb{E}}
\newcommand{\PP}{\mathbb{P}}
\newcommand{\E}{\mathcal{E}}
\newcommand{\mockalph}[1]{}
\tikzstyle{p}+=[fill=black, circle, minimum width = 1pt, inner sep =
\tikzstyle{w}+=[fill=white, draw, circle, minimum width = 1pt, inner sep =
\title{\vspace{-0.7cm}Extended commonality of paths and cycles via Schur convexity}
\author{Jang Soo Kim\thanks{Department of Mathematics, 
		Sungkyunkwan University. 
		Email: {\tt jangsookim@skku.edu}.}
\and
Joonkyung Lee\thanks{Department of Mathematics, 
		Hanyang University, Seoul
		and Extremal Combinatorics and Probability Group, Institute for Basic Sciences (IBS). 
		Email: \texttt{joonkyunglee}@\texttt{hanyang.ac.kr}.
		}
}
\date{}
\begin{document}
\maketitle

\begin{abstract}
  A graph $H$ is \emph{common} if the number of monochromatic copies of $H$ in a 2-edge-colouring of the complete graph $K_n$ is asymptotically minimised by the random colouring, or equivalently, $t_H(W)+t_H(1-W)\geq 2^{1-e(H)}$ holds for every graphon $W:[0,1]^2\rightarrow [0,1]$, where $t_H(.)$ denotes the homomorphism density of the graph $H$. Paths and cycles being common is one of the earliest cornerstones in extremal graph theory, due to Mulholland and Smith (1959), Goodman (1959), and Sidorenko (1989).

We prove a graph homomorphism inequality that extends the commonality of paths and cycles.
Namely, $t_H(W)+t_H(1-W)\geq t_{K_2}(W)^{e(H)} +t_{K_2}(1-W)^{e(H)}$ whenever $H$ is a path or a cycle and $W:[0,1]^2\rightarrow\mathbb{R}$ is a bounded symmetric measurable function.

This answers a question of Sidorenko from 1989, who proved a slightly weaker result for even-length paths to prove the commonality of odd cycles. 
Furthermore, it also settles a recent conjecture of Behague, Morrison, and Noel in a strong form, who asked if the inequality holds for graphons $W$ and odd cycles $H$. Our proof uses Schur convexity of complete homogeneous symmetric functions, which may be of independent interest.
\end{abstract}

\section{Introduction}
Given a bounded measurable symmetric function $W:[0,1]^2\rightarrow\mathbb{R}$ and a graph $H$, 
let 
\begin{align*}
    t_H(W) :=\int_{[0,1]^{V(H)}} \prod_{ij\in E(H)} W(x_i,x_j)
    \prod_{i\in V(H)} d x_i ,
\end{align*}
where the integration is taken with respect to the Lebesgue measure. This functional $t_H(.)$ is often called the \emph{(weighted) homomorphism density of $H$}, which generalises normalised homomorphism counts from $H$ to another graph $G$. 

Various results in extremal graph theory can be interpreted by using homomorphism densities, especially by using \emph{graphons} $W$, i.e., measurable
symmetric functions $W:[0,1]^2\rightarrow [0,1]$,
although extensions to general real-valued functions~\cite{CL16,H10} or even to complex-valued functions~\cite{H09,LS21} are certainly possible. 
We refer the reader to the modern theory of graph limits~\cite{L12} for more examples.

One of the central concepts that can be rephrased conveniently by using homomorphism densities is the \emph{commonality} of graphs. A graph $H$ is \emph{common} if the number of monochromatic $H$-copies in a 2-edge-colouring of the complete graph $K_n$ is asymptotically minimised by the random colouring.
The modern language rewrites the commonality of $H$ as the simple inequality
\begin{align*}
    t_H(W)+t_H(1-W) \geq 2^{1-e(H)}
\end{align*}
for every graphon $W$, where $e(H)$ denotes the number of edges in $H$.

Since Goodman's formula~\cite{G59} and the famous conjectures of Erd\H{o}s~\cite{E62common} and of Burr--Rosta~\cite{BR80}, later disproved by Thomason~\cite{Thom89} and by Sidorenko~\cite{Sid89common}, respectively, common graphs have been extensively studied
\cite{F08,GLLV22,HHKNR12,JST96,KVW22,Sid96}. 
Amongst many, perhaps the most fundamental examples of common graphs are paths and cycles.
Our main result is to prove a new homomorphism density inequality for paths and cycles, which extends their commonality. 
For brevity, a bounded symmetric measurable function $W:[0,1]^2\rightarrow \mathbb{R}$ is said to be a \emph{kernel}.

\begin{theorem}\label{thm:main}
Let $H$ be a path or a cycle and let $W$ be a kernel.
Then 
\begin{align}\label{eq:main}
    t_{H}(W) +t_{H}(1-W) \geq t_{K_2}(W)^{e(H)} +t_{K_2}(1-W)^{e(H)}.
\end{align}
\end{theorem}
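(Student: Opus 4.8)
The plan is to reduce \eqref{eq:main} to a finite-dimensional spectral statement and then exploit a one-parameter deformation together with the fundamental theorem of calculus. First I would pass from a general kernel $W$ to a step kernel: approximating $W$ in $L^1$ by kernels that are constant on the cells of an equipartition of $[0,1]$ (with no increase of $\norm{\cdot}_\infty$) makes $t_H(W)$, $t_H(1-W)$ and $t_{K_2}(W)$ converge, so it suffices to treat the case where $W$ corresponds to a symmetric $n\times n$ matrix $A$. Writing $v:=\tfrac1{\sqrt n}\mathbf 1$ (so $\norm{v}_2=1$) and $M:=\tfrac1nA$, one has $t_{K_2}(W)=v^{\top}Mv=:\alpha$, $t_{P_k}(W)=v^{\top}M^kv$ and $t_{C_k}(W)=\tr(M^k)$, while $1-W$ corresponds to $N:=vv^{\top}-M$, so $t_{K_2}(1-W)=1-\alpha$, $t_{P_k}(1-W)=v^{\top}N^kv$ and $t_{C_k}(1-W)=\tr(N^k)$. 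The elementary but crucial observation is that with $M_c:=M-\alpha vv^{\top}+c\,vv^{\top}$ we have $M_\alpha=M$ and $-N=M_{\alpha-1}$, whence $t_{P_k}(1-W)=(-1)^kv^{\top}M_{\alpha-1}^kv$ and $t_{C_k}(1-W)=(-1)^k\tr(M_{\alpha-1}^k)$. For $k$ even the inequality is easy: $v^{\top}M^kv=\int t^k\,d\mu\ge\big(\int t\,d\mu\big)^k=\alpha^k$ by Jensen (with $\mu$ the spectral measure of $M$ at $v$), and for cycles $\tr(M^k)\ge v^{\top}M^kv\ge\alpha^k$, likewise for $N$. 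So from now on assume $k$ is odd (the case $k=1$, i.e. $H=K_2$, being trivial).

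For odd $k$ introduce the polynomials $\Phi(c):=\tr(M_c^k)$ (for cycles) and $\Psi(c):=v^{\top}M_c^kv$ (for paths); these really are polynomials in $c$, since the entries of $M_c$ are affine in $c$. By the observation above, $t_{C_k}(W)+t_{C_k}(1-W)=\Phi(\alpha)-\Phi(\alpha-1)=\int_{\alpha-1}^{\alpha}\Phi'(c)\,dc$, and similarly with $\Psi$ for paths. Since $\int_{\alpha-1}^{\alpha}kc^{k-1}\,dc=\alpha^k-(\alpha-1)^k=\alpha^k+(1-\alpha)^k$ for $k$ odd, \emph{the theorem reduces to the pointwise derivative bounds} $\Phi'(c)\ge kc^{k-1}$ and $\Psi'(c)\ge kc^{k-1}$, valid for every $c\in\RR$.

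Differentiating produces clean formulas. Let $\mu_c$ denote the spectral measure of the symmetric matrix $M_c$ at the unit vector $v$; it is a probability measure on $\RR$ with $\int t\,d\mu_c(t)=v^{\top}M_cv=c$. Then $\Phi'(c)=k\,\tr(M_c^{k-1}vv^{\top})=k\,v^{\top}M_c^{k-1}v=k\int t^{k-1}\,d\mu_c(t)\ge k\big(\int t\,d\mu_c\big)^{k-1}=kc^{k-1}$, using Jensen for the convex function $t\mapsto t^{k-1}$ ($k-1$ even); this already settles \emph{all odd cycles}, i.e. the Behague--Morrison--Noel conjecture in the strong kernel form. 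For paths, $\frac{d}{dc}M_c^k=\sum_{j=0}^{k-1}M_c^{j}vv^{\top}M_c^{k-1-j}$ gives $\Psi'(c)=\sum_{j=0}^{k-1}(v^{\top}M_c^jv)(v^{\top}M_c^{k-1-j}v)=\iint h_{k-1}(x,y)\,d\mu_c(x)\,d\mu_c(y)$, where $h_{k-1}(x,y)=\sum_{j=0}^{k-1}x^jy^{k-1-j}$ is the complete homogeneous symmetric polynomial of degree $k-1$ in two variables. Here Schur convexity enters: since $k-1$ is even, $h_{k-1}$ is Schur convex on $\RR^2$, so $h_{k-1}(x,y)\ge h_{k-1}\big(\tfrac{x+y}2,\tfrac{x+y}2\big)=k\big(\tfrac{x+y}2\big)^{k-1}$ because $(x,y)$ majorizes $(\tfrac{x+y}2,\tfrac{x+y}2)$. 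Integrating against $\mu_c\times\mu_c$ and applying Jensen once more to $t\mapsto t^{k-1}$ and the random variable $X+Y$ of mean $2c$, we get $\Psi'(c)\ge \tfrac{k}{2^{k-1}}\iint (x+y)^{k-1}\,d\mu_c(x)\,d\mu_c(y)\ge \tfrac{k}{2^{k-1}}(2c)^{k-1}=kc^{k-1}$, as required.

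The only input that is not bare-hands is the Schur convexity of $h_{2r}$, and only on $\RR^2$, which follows from the identity $h_{2r}(x,y)=2^{-2r}\sum_{i=0}^{r}\binom{2r+1}{2i+1}(x+y)^{2(r-i)}(x-y)^{2i}$: for fixed $x+y$ the right-hand side is a polynomial in $(x-y)^2$ with nonnegative coefficients, hence nondecreasing in $\abs{x-y}$. (The statement that $h_{2r}$ is Schur convex on all of $\RR^n$ is the phenomenon we expect to be of independent interest, but it is not needed here.) I expect the main obstacle to be exactly the step of turning the formal identity $t_H(W)+t_H(1-W)=\Phi(\alpha)-\Phi(\alpha-1)$ into something usable: the naive hope that the eigenvalue multiset of $M$ majorizes $\{\alpha,0,\dots,0\}$ does not yield the odd-power inequality, since power sums are not Schur convex in odd degree; it is the $c$-derivative reformulation — together with the fact that for odd $k$ the relevant integrand $h_{k-1}$ has \emph{even} degree and is therefore Schur convex — that makes everything go through. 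The remaining care is routine: the finite-dimensional approximation, and checking that $\Phi,\Psi$ are honest polynomials in $c$ so that the fundamental theorem of calculus applies.
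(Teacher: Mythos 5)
Your proposal is correct, and it takes a genuinely different route from the paper. The paper's argument works directly with the kernel $U=2W-1$: it expands $t_H(W)+t_H(1-W)$ as $2^{1-e(H)}(1+\sum_{F\in\E^+(H)}t_F(U))$ over even subgraphs $F\subseteq H$, interprets the spectral decomposition of $U$ as a random variable $X_U$ with $t_{P_\ell}(U)=\EE[X_U^\ell]$, and then observes that the sum of $t_F(U)$ over even subgraphs of $P_m$ with $2d$ edges equals $\EE[h_{2d}(X_1,\dots,X_{m-2d+1})]$ for i.i.d.\ copies $X_i$ of $X_U$; this is compared to $\binom{m}{2d}t_{K_2}(U)^{2d}$ using Schur convexity of $h_{2d}$ in $m-2d+1$ variables followed by Jensen, and cycles are reduced to paths by counting each even subgraph of $C_m$ inside $C_m\setminus e$. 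Your argument instead passes to a finite symmetric matrix $M$, deforms $M$ along the line $M_c=M+(c-\alpha)vv^\top$ connecting $M$ to $-N$, and applies the fundamental theorem of calculus to reduce the inequality to the pointwise derivative bounds $\Phi'(c)\ge kc^{k-1}$ and $\Psi'(c)\ge kc^{k-1}$; these follow from Jensen (for cycles) and from Schur convexity of $h_{k-1}$ in just \emph{two} variables plus Jensen (for paths). The trade-offs: your route needs the (routine, but not nothing) finite approximation step and the observation that $\Phi,\Psi$ are polynomials in $c$, while the paper works on kernels throughout; in exchange you only need the two-variable instance of Schur convexity, for which you give a clean explicit identity, avoiding the paper's appeal to the general Schur--Ostrowski theorem, and your treatment of cycles is self-contained rather than being reduced to the path case. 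One small remark is that the paper's even-subgraph decomposition is precisely what yields the stability results of its Section~4; from your FTC formulation the analogous stability would require a quantitative lower bound on $\Psi'(c)-kc^{k-1}$, which is available but would need to be extracted separately.
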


As an immediate consequence,~\Cref{thm:main} extends the commonality of paths and cycles to kernels.
\begin{corollary}\label{cor:path}
Let $H$ be a path or a cycle and let $W$ be a kernel.
Then
\begin{align}\label{eq:common}
    t_{H}(W) +t_{H}(1-W) \geq 2^{1-e(H)}.
\end{align}
\end{corollary}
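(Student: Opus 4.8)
\medskip
\noindent\textbf{Proof strategy.}
The plan is to recast \eqref{eq:main} as a spectral inequality for the integral operator $T_Wf(x)=\int_0^1 W(x,y)f(y)\,dy$ on $L^2[0,1]$ and then to deduce it from Schur convexity of the complete homogeneous symmetric polynomial $h_k(z_1,\dots,z_n)=\sum_{i_1\le\cdots\le i_k}z_{i_1}\cdots z_{i_k}$ with $k=e(H)$.

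First I would reduce to step kernels: both sides of \eqref{eq:main} are continuous under $L^1$-convergence of kernels with a fixed uniform bound, and step kernels are dense. Passing to operators, $T_{1-W}=J-T_W$, where $J$ is the integral operator with constant kernel $1$, i.e.\ the rank-one projection onto the constants; this rank-one structure is the key. Writing $t_{P_k}(W)=\langle\mathbf 1,T_W^k\mathbf 1\rangle$ and $t_{C_k}(W)=\tr(T_W^k)$, and letting $\nu$ be the spectral measure of $T_W$ at $\mathbf 1$, so that $t_{P_k}(W)=\int x^k\,d\nu$ and $t_{K_2}(W)=\int x\,d\nu=:m$, the case of even $e(H)$ follows at once: for paths, Jensen's inequality gives $t_{P_k}(W)=\EE_\nu[X^k]\ge(\EE_\nu X)^k=t_{K_2}(W)^k$ and likewise with $1-W$; for cycles, $\tr(T_W^k)=\sum_i\lambda_i^k=\sum_i|\lambda_i|^k\ge\bigl(\sum_i|\lambda_i|\,\langle\mathbf 1,e_i\rangle^2\bigr)^k\ge t_{K_2}(W)^k$, using that the weights $\langle\mathbf 1,e_i\rangle^2$ are nonnegative, at most $1$, and sum to $1$. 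So the entire content is the \emph{odd} case.

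For odd $e(H)$, I would exploit that $J-T_W$ is a rank-one perturbation of $-T_W$. By the Sherman--Morrison / Schur-complement identity, the generating function $M(s)=\sum_k t_{P_k}(W)s^k=\langle\mathbf 1,(I-sT_W)^{-1}\mathbf 1\rangle$ of $W$ and that of $1-W$ satisfy the linear-fractional relation $\sum_k t_{P_k}(1-W)s^k=M(-s)/\bigl(1-sM(-s)\bigr)$, with the analogous identity $\det(I-s(J-T_W))=\det(I+sT_W)\bigl(1-s\langle\mathbf 1,(I+sT_W)^{-1}\mathbf 1\rangle\bigr)$ for cycles; thus the left side of \eqref{eq:main} is determined by the spectral data of $T_W$. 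Combining this with the expansion $t_H(W)+t_H(1-W)=2^{1-e(H)}\sum_{F\subseteq E(H),\,|F|\text{ even}}t_{H,F}(1-2W)$, in which only the even-size subgraphs $F$ survive and, for a path or cycle, each such $F$ is a vertex-disjoint union of subpaths whose contribution factorises into path densities, I expect \eqref{eq:main} for all path (resp.\ cycle) lengths simultaneously to reduce to the assertion that a certain symmetric functional of the eigenvalues of $T_W$, with $t_{K_2}(W)$ held fixed, is minimised precisely when $\nu$ degenerates to the point mass $\delta_m$. After clearing denominators and bookkeeping the parity signs, this should take the form $h_{e(H)}(\mathbf a)\ge h_{e(H)}(\mathbf b)$ for suitable tuples with $\mathbf a$ majorising $\mathbf b$ --- the majorisation reflecting the interlacing of $\operatorname{spec}(T_W)$ and $\operatorname{spec}(J-T_W)$ --- which Schur convexity of $h_{e(H)}$ would then close.

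The hard part is this odd case. Unlike the power sum $p_k=\sum_i z_i^k$, which is Schur convex only for even $k$, the complete homogeneous symmetric polynomial $h_k$ has a chance of being Schur convex for odd $k$ too --- but \emph{not} on all of $\RR^n$ (already $h_3$ fails this once negative coordinates are allowed) --- so the reduction must be engineered so that the relevant tuples lie in a domain on which $h_k$ genuinely is Schur convex, say a suitably shifted orthant or after passing to an even-degree instance. Pinning down the exact Schur-convexity statement that is needed and then verifying both the majorisation and the sign hypotheses under which it applies is, I expect, the real work; this is presumably the Schur-convexity result flagged as being of independent interest. One further complication arises only for cycles: the single subgraph $F=E(H)$ contributes the unweighted power sum $\tr(T_W^k)=\sum_i\lambda_i^k$ rather than its $\langle\mathbf 1,e_i\rangle^2$-weighted analogue --- corresponding to $\det(I-sT_W)^{-1}=\sum_k h_k(\lambda_1,\dots,\lambda_n)s^k$ versus the resolvent $\langle\mathbf 1,(I-sT_W)^{-1}\mathbf 1\rangle$ --- so the cycle term must be reconciled with the path-type terms before the symmetric-function inequality can be applied.
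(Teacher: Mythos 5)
You correctly spot the parity cancellation under $W\mapsto 1-W$, the factorisation of an even subgraph $F\subseteq P_m$ into vertex-disjoint subpaths, and the spectral-measure identity $t_{P_\ell}(U)=\EE[X_U^\ell]$ --- these are exactly the ingredients that drive the proof. The point where your plan breaks is the identification ``$k=e(H)$''. Once $U=2W-1$ and the even-subgraph expansion are in place, grouping $F\in\E^+(P_m)$ by $e(F)=2d$ and encoding each such $F$ by the edge-counts $\ell_1+\cdots+\ell_{m-2d+1}=2d$ of its path components gives
\[
  \sum_{F\in\E_{2d}^+(P_m)} t_F(U)=\EE\big[h_{2d}(X_1,\dots,X_{m-2d+1})\big],
\]
with $X_1,\dots,X_{m-2d+1}$ i.i.d.\ copies of $X_U$. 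So the complete homogeneous symmetric polynomial that actually appears is always $h_{2d}$ of \emph{even} degree, never $h_{e(H)}$. Your worry that $h_k$ fails to be Schur convex on $\RR^n$ for odd $k$ is well-founded, but it is a problem only because you have chosen the wrong polynomial; $h_{2d}$ is Schur convex on all of $\RR^k$ and nonnegative. Since $(X_1,\dots,X_k)$ pointwise majorises the constant tuple $(\overline{X},\dots,\overline{X})$ with $\overline{X}=(X_1+\cdots+X_k)/k$, one gets $h_{2d}(X_1,\dots,X_k)\ge \binom{k+2d-1}{2d}\overline{X}^{2d}$ pointwise, and then Jensen for the even power $\overline{X}^{2d}$ closes the path case; cycles reduce to paths by averaging over edge deletions, with your spectral bound $t_{C_{2m}}(U)\ge t_{K_2}(U)^{2m}$ covering the one extra term when $e(H)$ is even. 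For odd cycles $F=H\notin\E^+(H)$, so the ``unweighted power sum'' complication you flag at the end never arises.

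Two further remarks. First, the Sherman--Morrison / resolvent identity and the interlacing of $\operatorname{spec}(T_W)$ against $\operatorname{spec}(J-T_W)$ play no role: the only majorisation needed is the trivial one from a tuple to its own average, applied pointwise to the random tuple inside the expectation, not any spectral interlacing. Second, for the corollary itself (as opposed to the stronger inequality \eqref{eq:main}) you are overshooting: the bare nonnegativity $h_{2d}\ge 0$ (Hunter's theorem, or the displayed Schur inequality since $\overline{X}^{2d}\ge 0$) already yields $\sum_{F\in\E^+(H)}t_F(U)\ge 0$, which together with the expansion $t_H(W)+t_H(1-W)=2^{1-e(H)}\big(1+\sum_{F\in\E^+(H)}t_F(U)\big)$ gives \eqref{eq:common} directly. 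The reduction to step kernels is also unnecessary.
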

As \( t_{K_2}(1-W) = 1- t_{K_2}(W) \), 
the substitution $t_{K_2}(W)=x+1/2$ gives $t_{K_2}(W)^{m} +t_{K_2}(1-W)^{m}
=(1/2+x)^m +(1/2-x)^m$. For $m>1$, this polynomial attains its global minimum at $x=0$ and hence, the corollary follows.
In~\cite{Sid89common,Sid96}, Sidorenko proved \Cref{cor:path} for cycles, even-length paths, and paths of length~$m=r2^t + 1$, $r\leq 9$, but left the general odd-length paths case as a question.
\Cref{cor:path} thus completes the result of Sidorenko and answer his question in the affirmative. 
Furthermore, the proof technique allows us to obtain stability results for both~\Cref{thm:main} and~\Cref{cor:path}; see~\Cref{sec:stability} for more details. 

\medskip

\Cref{thm:main} can also be interpreted as a `convexity-type' homomorphism inequality, as the proof uses convexity of certain functions and deduction of commonality from it also uses convexity. The inequality \eqref{eq:common} for kernels was even called `convexity' by Sidorenko~\cite{Sid89common,Sid96}.
More generally, both local and global convexity of the functional $t_H(.)$ has been extremely useful in proving various graph homomorphism inequalities including instances for Sidorenko's conjecture~\cite{CL20,Sid92}, commonality of graphs~\cite{GLLV22}, graph norms~\cite{LSch21}, and density increment argument for the celebrated regularity lemma~\cite{Sz78}. Hence,~\Cref{thm:main} adds a new example to the encyclopedia of fundamental homomorphism inequalities.

In particular, when $H$ is the $m$-edge path $P_m$, \eqref{eq:main} can be seen as a partial extension of the so-called Blakley--Roy
inequality~\cite{BR65}, also obtained by Mulholland and Smith~\cite{MS59} and by
London~\cite{L66}, which proves $t_{P_m}(W)\geq t_{K_2}(W)^{m}$ for 
every graphon $W$. 
In fact, it is impossible to fully extend the Blakley--Roy inequality to kernels $W$,
as $t_{P_m}(-W)=-t_{P_m}(W)$ for odd $m$.
For grahons~$W$, even stronger generalisations are known; see, for example,~\cite{BR20path}.

For cycles $H$, \Cref{thm:main} settles a conjecture of Behague, Morrison, and Noel~\cite[Conjecture~9.7]{BMN22}, which states that the inequality~\eqref{eq:main} holds for all odd cycles $H$ and graphons $W$. 
They proposed the conjecture as a natural extension of the commonality of cycles and proved it for the $5$-cycle $H$.
We remark that some cases of the Behague--Morrison--Noel conjecture or the inequality~\eqref{eq:main} for kernels and some cycles have been well-known for decades, although the conjecture appeared only very recently. 
For example, the smallest case when $H$ is a triangle is essentially Goodman's formula~\cite{G59} and the case when $H$ is an even cycle follows from the fact that even cycles are \emph{norming}, observed by Chung, Graham, and Wilson~\cite{CGW89} and later rephrased by Hatami~\cite{H10}.

\medskip

Our proof uses Schur convexity of complete homogeneous symmetric functions. 
The study of complete homogeneous symmetric functions is a central area in algebraic combinatorics, although their Schur convexity received attention only recently~\cite{AT21,TaoSchur}.
On the other hand, in extremal graph theory, the theory of symmetric functions rarely appears to be useful, to
the best of our knowledge. Our method therefore bridges between the seemingly distant areas
in a novel way, which may be of independent interest.

\section{Preliminaries}\label{sec:prelim}

When considering kernels $U$ and $W$, the notation $U=W$ always means the equality holds almost everywhere. We suppress the expression `almost everywhere' in what follows for brevity.

\medskip

Denote by $\E^+(H)$ the set of all subgraphs $F$ of $H$ on $V(H)$ with positive even number of edges. 
For a kernel $W$, let $U:=2W-1$. Then $U$ is again a kernel. By the standard multilinear expansion of $t_H(1+U)$ and $t_H(1-U)$,
\begin{align}
 \notag   t_H(W) +t_H(1-W)& = 2^{-e(H)}\Big(t_H(1+U)+t_H(1-U)\Big)\\ 
  \label{eq:tH}
   & = 2^{1-e(H)}\left(1+\sum_{F\in\E^+(H)} t_F(U)\right).
\end{align}
Analogously, one can also expand $t_{K_2}(.)$ to obtain
\begin{align}
\notag  t_{K_2}(W)^{e(H)}+t_{K_2}(1-W)^{e(H)}
  &= 2^{-e(H)} \left( t_{K_2}(1+U)^{e(H)}+t_{K_2}(1-U)^{e(H)} \right)\\
  \notag    &=2^{1-e(H)}\sum_{k=0}^{\lfloor e(H)/2\rfloor} \binom{e(H)}{2k} t_{K_2}(U)^k \\
  \label{eq:tK}
    &=2^{1-e(H)}\left(1+\sum_{F\in\E^+(H)} t_{K_2}(U)^{e(F)}\right).
\end{align}
Thus, we obtain the following statement equivalent to~\Cref{thm:main}.
\begin{proposition}
Let $H$ be a path or a cycle and let $U$ be a kernel.
Then
\begin{align}\label{eq:nonneg}
    \sum_{F\in\E^+(H)} \big(t_F(U)-t_{K_2}(U)^{e(F)}\big)\geq 0.
\end{align}
\end{proposition}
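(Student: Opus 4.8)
The plan is to move to the spectral side of $U$, rewrite both sums in~\eqref{eq:nonneg} as coefficients of explicit resolvent generating functions, identify those coefficients as $\mu$-weighted averages of complete homogeneous symmetric polynomials of the eigenvalues of $U$, and finish with Schur convexity. I would treat $H=P_m$ and $H=C_m$ in parallel.

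View $U$ as a bounded self-adjoint operator on $L^2[0,1]$ with eigenvalues $(\lambda_i)$ and spectral masses $\mu_i:=\langle \mathbf{1},\phi_i\rangle^{2}$ at the constant function $\mathbf{1}$, so that $\sum_i\mu_i=1$, $t_{P_b}(U)=\sum_i\mu_i\lambda_i^{b}$, $t_{C_b}(U)=\tr(U^b)=\sum_i\lambda_i^{b}$, and $\alpha:=t_{K_2}(U)=\sum_i\mu_i\lambda_i$. Every $F\in\E^+(H)$ is a vertex-spanning disjoint union of subpaths of $H$ (its \emph{arcs}), together with the option $F=H$ when $H$ is an even cycle, so $t_F(U)$ is a product of path densities $t_{P_b}(U)$ (or equals $t_{C_m}(U)$), while $t_{K_2}(U)^{e(F)}$ is the same product with each $t_{P_b}(U)$ replaced by $\alpha^{b}$. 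By the multilinear expansion $t_H(1\pm U)=\sum_S(\pm1)^{|S|}t_{F_S}(U)$ one has $\sum_{F\in\E^+(H)}t_F(U)=\tfrac12\big(t_H(1+U)+t_H(1-U)\big)-1$, and likewise for the other sum. Regarding the all-ones kernel as the rank-one operator $\mathbf{1}\mathbf{1}^{*}$ and resolving the rank-one perturbation, one obtains, with $\phi(y):=\sum_i\tfrac{\mu_i}{1-y\lambda_i}=\sum_{b\ge0}t_{P_b}(U)y^{b}$ and $\chi(y):=\sum_i\tfrac{\mu_i}{(1-y\lambda_i)^{2}}$,
\begin{align*}
\sum_{m\ge0}t_{P_m}(1+U)\,y^{m}&=\frac{\phi(y)}{1-y\phi(y)},\\
\sum_{m\ge0}\big(t_{C_m}(1+U)-t_{C_m}(U)\big)\,y^{m}&=\frac{y\,\chi(y)}{1-y\phi(y)},
\end{align*}
together with the same formulas for $1-U$ after replacing $\phi(y),\chi(y)$ by $\phi(-y),\chi(-y)$. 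The degenerate choice $\mu=\delta_\alpha$ (realised by $U=\alpha\,\mathbf{1}\mathbf{1}^{*}$, i.e.\ $\phi(y)=\tfrac1{1-\alpha y}$) turns these right-hand sides into the generating functions of $(1+\alpha)^{m}$ and $(1-\alpha)^{m}$, so~\eqref{eq:nonneg} reduces to comparing, coefficient by coefficient in $y$, the series for a general $U$ with those for this degenerate $U$.

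Expanding $\tfrac{\phi(y)}{1-y\phi(y)}=\sum_{k\ge0}y^{k}\phi(y)^{k+1}$ and using the generating identity $\prod_{l}\tfrac1{1-yx_l}=\sum_{n\ge0}h_n(x_0,\dots,x_k)y^{n}$ for the complete homogeneous symmetric polynomials $h_n$, the coefficient of $y^{m}$ in $\tfrac{\phi(y)}{1-y\phi(y)}+\tfrac{\phi(-y)}{1-y\phi(-y)}$ is $\sum_{k}\big(1+(-1)^{m-k}\big)\sum_{i_0,\dots,i_k}\mu_{i_0}\cdots\mu_{i_k}\,h_{m-k}(\lambda_{i_0},\dots,\lambda_{i_k})$; the cycle case is analogous, the squared resolvent $\chi$ producing $h_{m-k-1}(\lambda_{i_0},\lambda_{i_0},\lambda_{i_1},\dots,\lambda_{i_k})$ (one eigenvalue repeated) with the corresponding parity factor. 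In both cases that parity factor annihilates every odd-degree contribution, so \emph{only $h_n$ of even degree are ever evaluated} --- precisely the regime in which $h_n$ is Schur convex on all of $\mathbb{R}^{N}$~\cite{AT21,TaoSchur}. For even $n$ and any index tuple, the vector of its $\lambda$-entries majorises the constant vector equal to their average $\overline{\lambda}$, so Schur convexity gives $h_n(\lambda_{i_0},\dots)\ge h_n(\overline{\lambda},\dots,\overline{\lambda})$, a fixed positive multiple of $\overline{\lambda}^{\,n}$; a short computation with $\sum_i\mu_i=1$, $\sum_i\mu_i\lambda_i=\alpha$ shows these averages $\overline{\lambda}$ have $(\mu_{i_0}\cdots\mu_{i_k})$-weighted mean exactly $\alpha$, so convexity of $x\mapsto x^{n}$ (again using $n$ even) yields $\sum\mu_{i_0}\cdots\mu_{i_k}\,\overline{\lambda}^{\,n}\ge\alpha^{n}$. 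Hence each coefficient for a general $U$ is at least the corresponding one for $U=\alpha\,\mathbf{1}\mathbf{1}^{*}$, which is~\eqref{eq:nonneg} for paths; for cycles one additionally uses $t_{C_m}(U)=\tr(U^m)\ge\alpha^{m}$ for even $m$, true because even cycles are norming (as recalled after~\Cref{cor:path}). Since equality in the Schur convexity and Jensen steps forces $\mu=\delta_\alpha$, the same analysis delivers the stability statements announced in~\Cref{sec:stability}.

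The main obstacle, as I see it, is conceptual rather than computational: finding this route at all --- recognising that, after the symmetrisation $W\leftrightarrow 1-W$, the whole inequality funnels through \emph{even-degree} complete homogeneous symmetric polynomials and their Schur convexity, and that the freely varying masses $\mu_i$ and the lone repeated eigenvalue in the cycle case do no harm. That some global mechanism is unavoidable is clear: the individual odd-length arc terms $t_{P_b}(U)-\alpha^{b}$ can genuinely be negative (for instance whenever $\alpha<0$), so no arc-by-arc comparison can succeed. The residual difficulties are analytic rather than combinatorial: one must justify the resolvent generating-function identities for an arbitrary kernel, where $U$ may have infinite spectrum and $\tr$ is only conditionally meaningful --- which I would handle by establishing everything first for finite-rank $U$ and then passing to the limit, the combination $t_{C_m}(1+U)-t_{C_m}(U)$ being the genuinely convergent quantity to track.
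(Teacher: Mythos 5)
Your proof is correct, and it rests on the same engine as the paper's --- Schur convexity of even-degree complete homogeneous symmetric polynomials applied to the eigenvalues $\lambda_i$ of $U$, with a Jensen step to pull the $\mu$-weighted average of $\overline{\lambda}^{2d}$ down to $\alpha^{2d}$ --- but the combinatorial bookkeeping is packaged rather differently. The paper works with a fixed even-edge level $2d$ at a time: a spanning even subgraph of $P_m$ with $2d$ edges is the same as a weak composition of $2d$ into $m-2d+1$ parts, which yields the identity $q_{m,d}(U) = \EE\big[h_{2d}(X_1,\dots,X_{m-2d+1})\big]$ for i.i.d.\ copies $X_i$ of the spectral random variable $X_U$ (\Cref{lem:rv_interpret}). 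This is a finite algebraic identity with no analytic convergence issues, and it directly produces the explicit binomial coefficient $\binom{m}{2d}$ that drives both \Cref{thm:main_ineq} and the stability bounds of \Cref{sec:stability}. You instead encode all levels at once by the resolvent generating functions $\phi(y)/(1-y\phi(y))$ and $y\chi(y)/(1-y\phi(y))$ and read off coefficients as $\mu$-weighted sums of $h_n$ of eigenvalue tuples; the same parity symmetrisation then isolates even $n$. The most interesting divergence is the cycle case: the paper reduces $\E^+(C_m)$ to $\E^+(P_{m-1})$ by the $\frac{1}{m-e(F)}$ overcounting argument in \eqref{eq:E+}, invoking \Cref{thm:main_ineq} as a black box, whereas you obtain the cycle contribution directly from the squared resolvent $\chi$, at the cost of a repeated eigenvalue in the $h_n$ argument (harmless for Schur convexity) and at the cost of the Sherman--Morrison/trace manipulations that you rightly flag as needing a finite-rank approximation argument for a general kernel. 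Both routes still need the separate norming inequality $t_{C_{2m}}(U)\ge t_{K_2}(U)^{2m}$ for even cycles. Net: your approach unifies paths and cycles into one resolvent framework and avoids the ad hoc overcounting step; the paper's approach is more elementary, entirely finitary, and hands over the numerical constants more cleanly, which is why it is the better vehicle for the stability section.
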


For an integer $d>0$, let $\E_{2d}^+(H)$ be the set of even subgraphs with exactly $2d$ edges. Then
\begin{align}\label{eq:Ed}
\sum_{F\in\E^+(H)} (t_F(U) - t_{K_2}(U)^{e(F)}) = \sum_{d=1}^{\lfloor e(H)/2 \rfloor} \sum_{F\in\E_{2d}^+(H)} (t_F(U) - t_{K_2}(U)^{2d}).    
\end{align}
When $H$ is a path, we shall prove~\eqref{eq:nonneg} directly by showing that $\sum_{F\in\E_{2d}^+(H)} (t_F(U) - t_{K_2}(U)^{2d})\geq 0$ for each $d=1,2,\dots,\lfloor e(H)/2 \rfloor$.

Now consider the case $H=C_m$, a cycle of length $m$.
Suppose first that $e(H)=m$ is odd. Then each $F\in\E^+(C_m)$ must be a proper subgraph; consider each $F\in\E^+(C_m)$ as a subgraph of $C_m\setminus e$ for every choice of $e\in E(H)\setminus E(F)$. 
By doing so, each $F$ counts exactly $e(H)-e(F)$ times. Thus,
\begin{align}\notag
    \sum_{F\in\E^+(H)} (t_F(U) - t_{K_2}(U)^{e(F)}) &=
    \sum_{e\in E(H)}\sum_{F\in \E^+(H\setminus e)}\frac{1}{e(H)-e(F)}(t_F(U) - t_{K_2}(U)^{e(F)})\\ \label{eq:E+}
    &=
    \sum_{e\in E(H)}\sum_{d=1}^{ \lfloor(e(H)-1)/2\rfloor }\frac{1}{e(H)-2d} \sum_{F\in\E_{2d}^+(H\setminus e)} (t_F(U) - t_{K_2}(U)^{2d}).
\end{align}
If $e(H)=m$ is even, then one extra term $(t_H(U)-t_{K_2}(U)^{e(H)})$ adds to~\eqref{eq:E+}.

If $H$ is a cycle of length $m+1$, then $H\setminus e$ is always a path of length $m$. 
Therefore, the following theorem, which will be shown in the next section, implies~\Cref{thm:main} for odd cycles $H$.
\begin{theorem}\label{thm:main_ineq}
Let $U$ be a kernel. Then for all integers $m$ and $d$ with $1\leq d\leq m/2$, 
\begin{align*}
  \sum_{F\in\E_{2d}^+(P_m)} t_F(U) \geq \binom{m}{2d} t_{K_2}(U)^{2d}.
\end{align*}
\end{theorem}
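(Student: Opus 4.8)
The plan is to understand the left-hand side $\sum_{F\in\E_{2d}^+(P_m)} t_F(U)$ combinatorially and then compare it term-by-term to $\binom{m}{2d} t_{K_2}(U)^{2d}$. Label the edges of $P_m$ as $e_1,\dots,e_m$ along the path. A subgraph $F\in\E_{2d}^+(P_m)$ is just a choice of $2d$ edges; deleting the unchosen edges splits the chosen ones into maximal runs of consecutive edges, so $F$ is a disjoint union of subpaths $P_{a_1},\dots,P_{a_\ell}$ with $a_1+\dots+a_\ell=2d$ and each $a_i\geq 1$. Since $t_F(U)=\prod_i t_{P_{a_i}}(U)$ for a disjoint union, the whole sum factors nicely once we know how many $F$ have a given composition. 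The key quantity is therefore $p_j:=t_{P_j}(U)$, the path-of-length-$j$ density, with $p_1=t_{K_2}(U)$, and we want $\sum_F \prod_i p_{a_i} \geq \binom{m}{2d} p_1^{2d}$.

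Next I would make the spectral structure explicit. Writing $U$ as an integral operator on $L^2[0,1]$, it is self-adjoint and compact, so it has real eigenvalues $\lambda_1,\lambda_2,\dots$ with $\sum \lambda_i^2 < \infty$, and $t_{P_j}(U) = \langle T_U^{j-1} f, f\rangle$ where $f\equiv 1$; more usefully, there are nonnegative weights such that $p_j = \sum_i c_i \mu_i^{\,j-1}$ for real $\mu_i$ and $c_i\geq 0$ with $\sum c_i = p_1$ — essentially the spectral decomposition of $U$ restricted to the cyclic subspace generated by the constant function. Actually the cleanest route: because only even subgraphs appear, all the relevant path-densities $p_j$ that enter (after the combinatorial reduction) can be organized so that the inequality becomes a statement purely about the numbers $p_1, p_2, p_3, \dots$, subject to the moment-type constraints they satisfy as $p_j = \int \mu^{j-1}\,d\nu(\mu)$ for a nonnegative measure $\nu$ of total mass $p_1$. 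At this point the problem is decoupled from graphons entirely.

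The heart of the argument is then Schur convexity of complete homogeneous symmetric polynomials, as advertised in the introduction. The generating-function identity $\sum_{j\geq 0} (\text{number of compositions-weighted paths}) x^j$ should collapse to $h_{2d}$-type expressions in the eigenvalue variables: concretely, summing $\prod_i p_{a_i}$ over all ways to place $2d$ chosen edges among $m$ is a coefficient extraction that turns into $\binom{m}{2d}$ times an average of $h_{2d}(\mu_{i_1},\dots)$ — a complete homogeneous symmetric function of degree $2d$ in the spectral variables — while the right-hand side $\binom{m}{2d} p_1^{2d}$ is $\binom{m}{2d}$ times $h_{2d}$ evaluated at the "flattened" point where all mass sits at the mean. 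Since $h_{2d}$ of even degree is Schur-convex on all of $\mathbb{R}^n$ (this is the recent result of Tao and of Aliev–Turek cited as~\cite{TaoSchur,AT21}), and flattening a tuple to its mean is a majorization-minimal operation, the inequality follows. I would verify the even-degree hypothesis is exactly what makes $h_{2d}$ Schur-convex rather than merely Schur-convex on the positive orthant — this is why the restriction to $\E^+$ (even subgraphs) is essential and where the whole approach earns its keep.

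The main obstacle I anticipate is the bookkeeping in the second step: correctly identifying the combinatorial multiplicities (how many subgraphs $F\subseteq P_m$ decompose into a given multiset of subpaths, as a function of $m$ and $2d$) and checking that after summing these against $\prod p_{a_i}$ one really lands on a clean symmetric-function expression in the eigenvalues — not an arbitrary polynomial, but precisely $\binom{m}{2d}$ times something comparable to $h_{2d}$. The generating-function manipulation should make this transparent, but getting the normalization to match $\binom{m}{2d} p_1^{2d}$ on the nose, and confirming the majorization goes the right direction, is the delicate part. The appeal to Schur convexity of $h_{2d}$ itself is then a black box.
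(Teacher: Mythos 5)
Your proposal is on the right track and closely mirrors the paper's own argument: spectral moment representation of $t_{P_j}(U)$, combinatorial decomposition of even subgraphs of $P_m$ into $h_{2d}$ of i.i.d.\ spectral variables, then Schur convexity. Two things, however, need attention.

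First, a small but genuine slip in the moment representation. You write $p_j = \int \mu^{j-1}\,d\nu(\mu)$ with $\nu$ a \emph{nonnegative} measure of total mass $p_1$. That cannot be right in general: with $p_j=t_{P_j}(U)$ the correct representation (the paper's \Cref{lem:spec}) is $t_{P_j}(U)=\int \mu^{j}\,d\pi(\mu)$ with $\pi$ a genuine \emph{probability} measure on the spectrum plus an atom at $0$. Your $\nu$ would have to be $\mu\,d\pi$, which is a signed measure once negative eigenvalues appear, so nonnegativity fails. This matters because the whole point of the even-degree restriction is that the spectral variables may be negative; if you could push mass into a nonnegative measure you would not need $h_{2d}$ at all. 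Keep the probability measure $\pi$ (equivalently, i.i.d.\ random variables $X_1,\dots,X_{m-2d+1}$ with $\EE[X_i^j]=t_{P_j}(U)$). For the bookkeeping you flag as delicate, the clean route is not to sum over compositions $(a_1,\dots,a_\ell)$ of $2d$ with $a_i\geq 1$ (which forces you to count placements), but rather to parametrise $F$ by the sizes $\ell_1,\dots,\ell_{m-2d+1}\geq 0$ of the components of $F$ sitting to the left of each deleted edge and at the right end, giving a bijection with weak compositions $\sum\ell_i=2d$ and immediately $q_{m,d}(U)=\EE[h_{2d}(X_1,\dots,X_{m-2d+1})]$ with no stray multiplicities.

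Second, and more importantly, the last step is a gap. Schur convexity gives the pointwise inequality $h_{2d}(x_1,\dots,x_k)\geq \binom{m}{2d}\,\overline{x}^{2d}$ where $\overline{x}$ is the \emph{sample} mean $(x_1+\dots+x_k)/k$. Taking expectation therefore yields $\EE[h_{2d}(X_1,\dots,X_k)]\geq \binom{m}{2d}\,\EE[\overline{X}^{2d}]$, not $\binom{m}{2d}\,t_{K_2}(U)^{2d}$: the random variable $\overline{X}$ has mean $t_{K_2}(U)$ but is not constant, so ``flattening to the mean'' by Schur convexity lands you at $\overline{X}$, not at $\EE[\overline{X}]$. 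You still need Jensen's inequality for the convex map $t\mapsto t^{2d}$ to conclude $\EE[\overline{X}^{2d}]\geq \EE[\overline{X}]^{2d}=t_{K_2}(U)^{2d}$. The paper does exactly this; your ``the inequality follows'' silently conflates the sample average with the expectation. (Alternatively you could use that $h_{2d}$ is in fact \emph{convex}, not just Schur-convex --- the Barvinok expectation trick shows this --- and apply multivariate Jensen once; but as written, with Schur convexity alone, the Jensen step is missing.)
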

For even cycles $H$, we need an extra inequality 
\begin{align}\label{eq:even_cycle}
    t_H(U)\geq t_{K_2}(U)^{e(H)}
\end{align}
for each kernel $U$ to deduce~\Cref{thm:main}.
This is reminiscent of Sidorenko's conjecture, which states that~\eqref{eq:even_cycle} holds for every bipartite graph $H$ and every graphon $U$.
Even cycles are well-known to satisfy Sidorenko's conjecture~\cite{Sid93}, but~\eqref{eq:even_cycle} for kernels $U$ is slightly stronger than this fact.
Even so, it is not hard to verify it and a short proof will be given at the end of this section.
In fact, the inequality~\eqref{eq:even_cycle} for kernels $U$ is well-known since Chung, Graham, and Wilson's quasirandomness characterisation~\cite{CGW89}; also see~\cite{H10} for its modern interpretation in terms of graph limits. 

\medskip

We shall use some spectral properties of kernels. Following~\cite[Section~7.5]{L12}, a kernel $U$ can be seen as a Hilbert--Schmidt operator
\begin{align*}
    (Uf)(x) := \int_0^1 U(x,y)f(y) dy,
\end{align*}
 on $L^2[0,1]$. This operator then has countable real eigenvalues $(\lambda_i)_{i=1}^{\infty}$, where $|\lambda_i|\geq |\lambda_j|$ whenever $i<j$.
 Let $f_i$ be the orthonormal eigenfunction corresponding to nonzero $\lambda_i$, i.e.,
$\langle f_i,f_j \rangle = \delta_{i,j}$ and $Uf_i = \lambda_i f_i$.
Then $U$ admits the spectral decomposition $U(x,y)=\sum_{i=1}^{\infty}\lambda_i f_i(x)f_i(y)$.
 Hence,
 \begin{align*}
     t_{P_m}(U) = \sum_{i=1}^{\infty} \lambda_i^m \left(\int_0^1 f_i(x) dx\right)^2
 \end{align*}
 and moreover, by the Parseval identity, 
 \begin{align*}
    \sum_{i=1}^{\infty}\left(\int_0^1 f_i(x)dx\right)^2=
    \left\Vert\sum_{i=1}^{\infty} \left(\int_0^1 f_i(x)dx\right) f_i \right\Vert_2^2 =
     \left\Vert\sum_{i=1}^{\infty} \langle f_i,1\rangle f_i\right\Vert_2^2 \leq \|1\|_2^2 = 1,
 \end{align*}
 which was also observed in~\cite[(13)]{KVW22}. The inequality above becomes an equality if and only if
 the constant function $1$ can be expressed as a linear combination of $f_i$'s, i.e.,
 $1=\sum_{i\geq 1}\langle f_i,1\rangle f_i$.
 Let $p_i := (\int_0^1 f_i(x) dx)^2$ for each $i\geq 1$, $p_0:=1-\sum_{i\geq 1}p_i$, and $\lambda_0:=0$. 
 Then 
 for each integer $m\geq 0$, $t_{P_m}(U) = \sum_{i\geq 0} p_i\lambda_i^{m}$.
 This rephrases as
 \begin{lemma}\label{lem:spec}
 Let $U$ be a kernel.
 Then there exists a discrete random variable $X_U$ such that $\PP[X_U=\lambda_i]=p_i$,
 $i=0,1,\dots$, and hence, $t_{P_m}(U)=\EE[X_U^m]$.
 \end{lemma}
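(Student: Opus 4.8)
The statement is a restatement, in probabilistic language, of the identity $t_{P_m}(U)=\sum_{i\ge 0}p_i\lambda_i^m$ worked out in the paragraph immediately above it, where $p_i=(\int_0^1 f_i)^2$ for $i\ge 1$, $p_0=1-\sum_{i\ge 1}p_i$, and $\lambda_0=0$. So the plan is short: verify that $(p_i)_{i\ge 0}$ is a probability distribution on $\{0,1,2,\dots\}$, realise it by a random index, push it forward along $i\mapsto\lambda_i$, and read off the moments; the only step that is not pure bookkeeping is the passage from the spectral decomposition of $U$ to that scalar identity, which I address at the end.

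First I would observe that $(p_i)_{i\ge 0}$ is a probability mass function: each $p_i\ge 0$, and Bessel's inequality gives $\sum_{i\ge 1}p_i=\sum_{i\ge 1}\langle f_i,1\rangle^2\le\|1\|_2^2=1$, so $p_0\ge 0$ and $\sum_{i\ge 0}p_i=1$. Then I would let $I$ be an $\{0,1,2,\dots\}$-valued random variable with $\PP[I=i]=p_i$ and set $X_U:=\lambda_I$; equivalently, $X_U$ is the pushforward of the law of $I$ under $i\mapsto\lambda_i$ (this is the precise meaning of ``$\PP[X_U=\lambda_i]=p_i$'' in the event that distinct indices carry equal eigenvalues — the mass at a repeated value being the corresponding partial sum of the $p_i$). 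Since $\lambda_0^m=0$ for $m\ge 1$,
\begin{align*}
  \EE[X_U^m]=\sum_{i\ge 0}p_i\lambda_i^m=\sum_{i\ge 1}p_i\lambda_i^m=t_{P_m}(U)\qquad(m\ge 1),
\end{align*}
and for $m=0$ both sides equal $1$ (using $t_{P_0}(U)=\int_0^1 dx_0=1$ and $X_U^0\equiv 1$). This is exactly the lemma.

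It remains to justify the identity $t_{P_m}(U)=\sum_{i\ge 1}\lambda_i^m\langle f_i,1\rangle^2$ quoted above. I would do this operator-theoretically rather than by integrating the kernel series $\sum_i\lambda_i f_i(x)f_i(y)$ term by term, since the latter would want $\sum_i|\lambda_i|<\infty$, which a Hilbert--Schmidt kernel need not satisfy. Expand the constant function $1$ in an orthonormal basis of $L^2[0,1]$ obtained by adjoining to $\{f_i\}_{i\ge 1}$ an orthonormal basis of $\ker U$, so $1=\sum_{i\ge 1}\langle f_i,1\rangle f_i+g$ with $g\in\ker U$ (and $g\perp f_i$ for all $i$, as $U$ is self-adjoint). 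Applying the bounded operator $U$ repeatedly gives $U^m 1=\sum_{i\ge 1}\lambda_i^m\langle f_i,1\rangle f_i$, a series convergent in $L^2$ because $\sum_i\lambda_i^{2m}\langle f_i,1\rangle^2\le\lambda_1^{2m}\sum_i\langle f_i,1\rangle^2\le\lambda_1^{2m}<\infty$. Since $t_{P_m}(U)=\langle 1,U^m 1\rangle$ by iterated integration, pairing the last expansion with $1$ and using orthonormality yields $t_{P_m}(U)=\sum_{i\ge 1}\lambda_i^m\langle f_i,1\rangle^2$; the analogous bound $\sum_i|\lambda_i|^m\langle f_i,1\rangle^2\le|\lambda_1|^m$ shows this series converges absolutely. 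There is no real obstacle here: the lemma is, in essence, a repackaging of facts already in hand, the only delicate point being the term-by-term justification just indicated.
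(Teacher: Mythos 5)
Your proof is correct and follows essentially the same route the paper takes in the paragraph preceding the lemma: set $p_i=\langle f_i,1\rangle^2$ for $i\ge 1$, $p_0=1-\sum_{i\ge 1}p_i$, $\lambda_0=0$, check via Bessel/Parseval that $(p_i)_{i\ge 0}$ is a probability distribution, realise it as the law of a random index $I$, and read off $t_{P_m}(U)=\sum_{i\ge 0}p_i\lambda_i^m=\EE[\lambda_I^m]$. The one point where you do more than the paper is in how you justify $t_{P_m}(U)=\sum_i\lambda_i^m\langle f_i,1\rangle^2$: the paper substitutes the $L^2$ kernel expansion $U(x,y)=\sum_i\lambda_i f_i(x)f_i(y)$ into the integral and passes to the sum, whereas you instead write $t_{P_m}(U)=\langle 1,U^m 1\rangle$, expand $1$ in an orthonormal basis extending $\{f_i\}$, and apply $U$ termwise using boundedness of the operator. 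This is a genuine (if small) improvement in rigor: a Hilbert--Schmidt kernel is square-summable but not necessarily trace class, so the kernel series need not converge absolutely and the interchange of integral and infinite sum in the paper's one-line ``hence'' is not entirely automatic, while your operator-theoretic version sidesteps this cleanly. Your handling of repeated eigenvalues via the pushforward description and the separate check at $m=0$ are also correct and tidy up points the paper leaves implicit.
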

 We remark that a `discrete' analogue of this lemma was already observed by Erd\H{o}s and Simonovits~\cite[Theorem~4]{ES82_compact}.
 The spectral technique is also useful in proving the inequality~\eqref{eq:even_cycle} for even cycles $H$ and a kernel~$U$.
 Indeed, as $t_{C_{2m}}(U)=\sum_i \lambda_i^{2m}$
 \cite[(7.22)]{L12},
 \begin{align*}
    t_{C_{2m}}(U)^{1/2m} \geq |\lambda_1| \geq \Big|\sum_{i\geq 0}p_i\lambda_i\Big| = |t_{K_2}(U)|,
 \end{align*}
 which is~\eqref{eq:even_cycle} for even cycles $H$.
 Thus,~\Cref{thm:main_ineq} implies~\Cref{thm:main} for even cycles $H$ too, although the result is already known due to the fact that even cycles are \emph{norming}. For more discussions about the norming property, we refer the reader to~\cite[Chapter~14]{L12}.

\section{Proof of the main theorem}

Our goal in this section is to prove~\Cref{thm:main_ineq}, which implies~\Cref{thm:main}.
For a kernel $U$, let $q_{m,d}(U)$ denote 
the left-hand side  of the inequality in \Cref{thm:main_ineq}, i.e.,
\begin{align*}
    q_{m,d}(U) := \sum_{F\in\E_{2d}^+(P_m)} t_F(U).
\end{align*}
Let us first
have a look at a small example that illustrates what $q_{m,d}(U)$ is. If $d=1$,
then the corresponding $\E_{2d}^+(P_m)$ consists of the subgraphs of $P_m$ with two
edges and $m+1$ vertices. That is, either a 2-edge path or a matching of size two plus isolated vertices. Hence,
\begin{align*}
    q_{m,1}(U) = (m-1)\EE[X_U^2] + \binom{m-1}{2}\EE[X_U]^2,
\end{align*}
where $X_U$ is defined in~\Cref{lem:spec}. This can be rewritten as
\begin{align*}
    q_{m,1}(U) = \EE \left[\sum_{i=1}^{m-1}X_i^2+\sum_{1\leq i<j\leq m-1}X_iX_j\right],
\end{align*}
where $X_i$'s are i.i.d.~copies of $X_U$.

\medskip

Let $h_d(x_1,\dots,x_k)$ be the \emph{$k$-variable complete homogeneous symmetric function of degree $d$}. That is,
\begin{align*}
    h_d(x_1,\dots,x_k) = \sum x_1^{\ell_1}\cdots x_k^{\ell_k},
\end{align*}
where the sum is taken over all the nonnegative integer solutions of $\ell_1+\dots+\ell_k=d$.
For example, $h_2(x_1,\dots,x_k)=\sum_{i=1}^{k}x_i^2+\sum_{1\leq i<j\leq k}x_ix_j$ and hence, $q_{m,1}(U)=\EE[h_2(X_1,\dots,X_{m-1})]$.

\medskip

By generalising this observation, we express $q_{m,d}(U)$ in
terms of the expectation of the homogeneous polynomials
$h_{2d}(X_1,\dots,X_k)$ of degree $2d$, where $X_i$ is an i.i.d.~copy of
$X_U$ in~\Cref{lem:spec}. 

\begin{lemma}\label{lem:rv_interpret}
Let $U$ be a kernel. For all integers $m$ and $d$ with $1\leq d\leq m/2$,
\begin{align*}
    q_{m,d}(U) = \EE\big[h_{2d}(X_1,\dots,X_{m-2d+1})\big],
\end{align*}
where $X_i$'s are i.i.d.~copies of $X_U$ given in~\Cref{lem:spec}. 
\end{lemma}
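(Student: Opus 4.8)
\textbf{Proof plan for Lemma~\ref{lem:rv_interpret}.}
The plan is to expand both sides and match them combinatorially, using the spectral description of $X_U$ from~\Cref{lem:spec} to convert homomorphism densities of forests into moment sums. First I would recall that a subgraph $F\in\E_{2d}^+(P_m)$ has $m+1$ vertices, $2d$ edges, and no isolated edges can overlap: since $F\subseteq P_m$, each connected component of $F$ that is not an isolated vertex is itself a subpath of $P_m$. Thus $F$ is determined by choosing a set of vertex-disjoint subpaths of $P_m$ with total edge count $2d$; if these subpaths have $a_1,a_2,\dots,a_r$ edges (each $a_j\geq 1$, $\sum a_j=2d$), then by multiplicativity of $t_{(\cdot)}(U)$ over connected components and the fact that a path component with $a_j$ edges contributes $t_{P_{a_j}}(U)=\EE[X_U^{a_j}]$ (the isolated vertices contributing factor $1$), we get $t_F(U)=\prod_{j=1}^r \EE[X_U^{a_j}]=\EE\big[\prod_{j=1}^r X_j^{a_j}\big]$ for independent copies $X_j$. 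Summing over all $F$ then gives $q_{m,d}(U)=\EE\big[\sum_{\text{placements}} \prod_j X_j^{a_j}\big]$.

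The combinatorial core is therefore to show that the number of ways to place vertex-disjoint subpaths in $P_m$ realizing a given "composition pattern" matches exactly the monomials of $h_{2d}(x_1,\dots,x_{m-2d+1})$. Concretely, I would set up a bijection: placing $r$ disjoint nonempty subpaths of sizes $a_1<\dots$ (as a multiset) along the $m$ edges of $P_m$, consuming $2d$ edges and leaving $m-2d$ edges as "gaps", is the standard stars-and-bars arrangement. The cleanest route is to index the $m-2d+1$ "slots" (the $r+1$ or fewer gap-regions, padded) and assign to the $k=m-2d+1$ variables $x_1,\dots,x_k$ an exponent $\ell_i\geq 0$ with $\sum \ell_i = 2d$, where slot $i$ receives a subpath of $\ell_i$ edges (and $\ell_i=0$ means no path there). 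One checks that each such weak composition $(\ell_1,\dots,\ell_k)$ of $2d$ corresponds to exactly one valid disjoint placement, because the subpaths, once their sizes are fixed in left-to-right order, are forced into position by the requirement that consecutive nonempty subpaths be separated by at least one gap edge and the total is $m$. This is precisely the content of $h_{2d}(x_1,\dots,x_k)=\sum_{\ell_1+\dots+\ell_k=2d} x_1^{\ell_1}\cdots x_k^{\ell_k}$ after identifying $x_i$ with the (interchangeable, by i.i.d.-ness) random variable feeding slot $i$.

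The step I expect to be the main obstacle is verifying that the "padding" count $k=m-2d+1$ is exactly right and that the correspondence is a genuine bijection rather than an over- or under-count—in particular, checking the edge cases where some $\ell_i$ vanish (fewer than the maximal number of components) and confirming that the separation constraint between consecutive subpaths is automatically handled by the weak-composition encoding. I would verify this by a direct counting argument: the number of $F\in\E_{2d}^+(P_m)$ with components of sizes given by a fixed integer partition $\mu\vdash 2d$ with $\ell(\mu)=r$ parts equals $\binom{m-2d+1}{r}\cdot(\text{number of distinct orderings of }\mu)$, the first factor choosing positions among the $m-2d$ gap-edges plus endpoints and the second accounting for the left-to-right order of the differently-sized parts; one then matches this against the number of monomials in $h_{2d}(x_1,\dots,x_k)$ of "shape" $\mu$, which is $\binom{k}{r}$ times the number of orderings of $\mu$, and since $k=m-2d+1$ these agree termwise. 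Taking expectations of the i.i.d.\ product $\prod X_j^{a_j}$ and summing over all $\mu$ then yields $q_{m,d}(U)=\EE[h_{2d}(X_1,\dots,X_{m-2d+1})]$, as claimed. As a sanity check, the $d=1$ case reproduces $q_{m,1}(U)=\EE[h_2(X_1,\dots,X_{m-1})]$ worked out above.
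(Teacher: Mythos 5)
Your proof is correct and follows essentially the same approach as the paper: both exploit multiplicativity of $t_{(\cdot)}(U)$ over components and the spectral identity $t_{P_\ell}(U)=\EE[X_U^\ell]$, and then encode each $F\in\E_{2d}^+(P_m)$ by the weak composition $(\ell_1,\dots,\ell_{m-2d+1})$ of $2d$ obtained by recording, left to right, the lengths of the path components sitting between consecutive gap edges (with $\ell_i=0$ for empty slots). The paper phrases the bijection by defining $\ell_i$ as the length of the component containing the left endpoint of the $i$-th gap edge, which is the same stars-and-bars encoding you describe; your supplementary count of placements by partition shape is a valid but redundant sanity check.
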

\begin{proof}
  Let $F\in\E_{2d}^+(P_m)$. Recall that $V(F)=V(P_m)$. Enumerate 
  the $m-2d$ edges in $E(P_m)\setminus E(F)$ by
  $e_1,\dots,e_{m-2d}$ from left to right in the $m$-edge path $P_m$. Let $\ell_i$
  be the number of edges in the component of $F$ that contains the leftmost
  vertex of $e_i$. In particular,
  if the left-intersecting component to $e_i$ is an isolated vertex, then
  $\ell_i=0$.
   Denote by $\ell_{m-2d+1}$ the number of edges in the component of $F$ that contains the rightmost vertex of $P_m$.
Clearly,
  $\sum_{i=1}^{m-2d+1}\ell_i = 2d$. 
  
  Conversely, every nonnegative integer
  solution to the equation $\sum_{i=1}^{m-2d+1}\ell_i = 2d$ uniquely determines the corresponding
  $F\in\E_{2d}^+(P_m)$, which satisfies
\begin{align*}
    t_F(U) = \prod_{i=1}^{m-2d+1}t_{P_{\ell_i}}(U) = \prod_{i=1}^{m-2d+1}\EE[X_U^{\ell_i}]=\EE\left[\prod_{i=1}^{m-2d+1}X_i^{\ell_i}\right],
\end{align*}
where $X_i$'s are i.i.d.~copies of $X_U$.
Therefore, 
\begin{align*}
    q_{m,d} (U)= \sum_{F\in\E_{2d}^{+}(P_m)} t_F(U) = \sum\EE\left[\prod_{i=1}^{m-2d+1}X_i^{\ell_i}\right]= \EE\left[\sum\prod_{i=1}^{m-2d+1}X_i^{\ell_i}\right],
\end{align*}
where the last two sums are taken over all nonnegative integers $\ell_i$'s such
that $\ell_1+\dots+\ell_{m-2d+1}=2d$. Thus,
$q_{m,d}(U)=\EE[h_{2d}(X_1,\dots,X_{m-2d+1})]$. 
\end{proof}
Note that $h_{2d}(x_1,\dots,x_k)=\binom{k+2d-1}{2d} x^{2d}$ if $x_i=x$ for all $i$. Letting $x=\EE[X_U]=t_{K_2}(U)$  and $k=m-2d+1$ then gives $h_{2d}(\EE[X_1],\dots,\EE[X_{m-2d+1}])=\binom{m}{2d}t_{K_2}(U)^{2d}$, which, together with \Cref{lem:rv_interpret}, suggests that some convexity of $h_{2d}$ may prove~\Cref{thm:main_ineq}.

To formalise this idea,
we need an easy consequence of \emph{Schur convexity} of~$h_{2d}$.
A real $k$-tuple $(x_1,\dots,x_k)$ \emph{majorises} another $k$-tuple $(y_1,\dots,y_k)$ if $\sum_{i=1}^{j}x_i\geq \sum_{i=1}^{j}y_i$ for every $j=1,\dots,k$ with equality for $j=k$.
A $k$-variable real polynomial $h$ is \emph{Schur convex} if $h(x_1,\dots,x_k)\geq h(y_1,\dots,y_k)$ whenever $(x_1,\dots,x_k)$ majorises $(y_1,\dots,y_k)$.
One can deduce from the classical Schur--Ostrowski theorem~\cite[Chapter~3, A.4.~Theorem]{Marshall2011}
that $h_{2d}(x_1,\dots,x_k)$ is Schur convex (see, e.g.,~\cite{TaoSchur}), whose immediate consequence is the following lemma. 
For self-containedness, we give a brief probabilistic proof which essentially rephrases 
Barvinok's argument \cite[Lemma~3.1]{Barvinok2005} 
and pushes it slightly further; see also \cite[Remark~6.4]{AT21} and
an anonymous comment in~\cite{TaoSchur}.

\begin{lemma}\label{lem:Schur}
Let $d,k>0$ be integers. Then 
for all real numbers $x_1,\dots,x_k$,
\[
     h_{2d}(x_1,\dots,x_k)\geq
    h_{2d}(\overbrace{\overbar{x},\dots,\overbar{x}}^k) =\binom{k+2d-1}{2d}\overbar{x}^{2d},
\]
     where
    $\overbar{x}=(x_1+\dots +x_k)/k$ and the equality holds if and only if $x_1=\dots=x_k$.
\end{lemma}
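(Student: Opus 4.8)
\textbf{Proof plan for \Cref{lem:Schur}.}

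The plan is to give a probabilistic argument expressing $h_{2d}(x_1,\dots,x_k)$ as an expectation over a random tuple, and then apply convexity of an even power. First I would introduce the generating function identity
\[
\sum_{d\geq 0} h_d(x_1,\dots,x_k)\, t^d = \prod_{i=1}^k \frac{1}{1-x_i t},
\]
valid as formal power series (and as an analytic identity for $|t|$ small), so that $h_{2d}$ is the coefficient of $t^{2d}$. Then I would rewrite each factor $\frac{1}{1-x_i t}$ probabilistically: for a geometric-type random variable, or more cleanly, I would use the integral representation coming from the fact that $h_d(x_1,\dots,x_k)$ counts monomials of degree $d$. Concretely, following Barvinok's argument, I would let $G_1,\dots,G_k$ be i.i.d.\ standard exponential random variables and use
\[
\EE\!\left[(x_1 G_1 + \dots + x_k G_k)^{2d}\right] = \sum_{\ell_1+\dots+\ell_k=2d} \binom{2d}{\ell_1,\dots,\ell_k} x_1^{\ell_1}\cdots x_k^{\ell_k}\, \EE[G_1^{\ell_1}]\cdots\EE[G_k^{\ell_k}],
\]
and since $\EE[G_i^{\ell}] = \ell!$ for a standard exponential, the multinomial coefficient times $\prod \ell_i!$ collapses to $(2d)!$, giving
\[
\EE\!\left[(x_1 G_1+\dots+x_k G_k)^{2d}\right] = (2d)!\sum_{\ell_1+\dots+\ell_k=2d} x_1^{\ell_1}\cdots x_k^{\ell_k} = (2d)!\, h_{2d}(x_1,\dots,x_k).
\]
So $h_{2d}(x_1,\dots,x_k) = \frac{1}{(2d)!}\EE[(x_1 G_1+\dots+x_k G_k)^{2d}]$.

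With this identity in hand, the inequality becomes a one-line convexity statement. Let $S = x_1 G_1 + \dots + x_k G_k$ and let $S' = \overbar{x}(G_1+\dots+G_k)$ be the corresponding sum with all coefficients replaced by the mean. The map $z \mapsto z^{2d}$ is convex on $\RR$, so by conditioning on the value of $T := G_1 + \dots + G_k$ and using Jensen's inequality on the conditional distribution of $S$ given $T$ — noting $\EE[S \mid T] = S'$ by exchangeability of the i.i.d.\ $G_i$ — we get $\EE[S^{2d}\mid T] \geq (\EE[S\mid T])^{2d} = (S')^{2d}$, hence $\EE[S^{2d}] \geq \EE[(S')^{2d}] = \overbar{x}^{2d}\,\EE[(G_1+\dots+G_k)^{2d}]$. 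Dividing by $(2d)!$ and recognising the right-hand side as $h_{2d}(\overbar{x},\dots,\overbar{x}) = \binom{k+2d-1}{2d}\overbar{x}^{2d}$ (which also follows directly from the monomial-counting formula, there being $\binom{k+2d-1}{2d}$ monomials of degree $2d$ in $k$ variables) yields the claim.

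For the equality characterisation, I would argue as follows. If $x_1 = \dots = x_k$ equality is trivial. Conversely, suppose the $x_i$ are not all equal. The strict convexity of $z\mapsto z^{2d}$ means Jensen is strict unless $S$ is a.s.\ constant given $T$; but given $T=1$ (say), $S = \sum x_i G_i$ ranges over a genuinely non-degenerate set of values whenever the $x_i$ differ, because the conditional law of $(G_1,\dots,G_k)$ given $\sum G_i = 1$ is the uniform (Dirichlet) measure on the simplex, on which $\sum x_i u_i$ is non-constant precisely when the $x_i$ are non-constant. Hence the inequality is strict. I expect the main obstacle to be stating this equality case cleanly — one must be a little careful that the event $\{T=1\}$ has measure zero, so the cleanest route is to observe that $\EE[S^{2d}\mid T] > (S')^{2d}$ holds on a positive-probability set of values of $T$ (indeed for every $T>0$, by the Dirichlet-simplex remark rescaled), which suffices. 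Everything else is routine moment computation for the exponential distribution.
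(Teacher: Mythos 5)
Your proof is correct and rests on the same core identity as the paper's: Barvinok's exponential-variable representation $h_{2d}(x_1,\dots,x_k)=\frac{1}{(2d)!}\,\EE\bigl[(x_1Z_1+\dots+x_kZ_k)^{2d}\bigr]$ for i.i.d.\ rate-one exponentials. The difference lies in how convexity of $z\mapsto z^{2d}$ is deployed. The paper averages over cyclic shifts of the \emph{coefficients} $x_i$: it forms $\frac{1}{k}\sum_{j}(\sum_i x_{i+j}Z_i)^{2d}$, applies convexity pointwise to see this dominates $(\sum_i \overbar{x}Z_i)^{2d}$, and then takes expectations using the fact that each cyclic shift has the same expectation $h_{2d}$. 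You instead condition on $T=\sum_i Z_i$, observe $\EE[S\mid T]=\overbar{x}T$ by exchangeability, and apply conditional Jensen. Both are valid; your route has a small advantage in that it cleanly delivers the equality characterisation, which the paper's displayed argument does not address explicitly — conditionally on $T=t>0$ the law of $(Z_1,\dots,Z_k)$ is uniform Dirichlet on the scaled simplex, where $\sum x_iu_i$ is non-constant exactly when the $x_i$ are not all equal, so strict Jensen applies for a.e.\ $t$. One minor caveat: you should note that $z\mapsto z^{2d}$ is strictly convex on $\mathbb{R}$ in the Jensen-equality sense (it is affine on no interval) even though its second derivative vanishes at $0$ for $d\geq 2$; this is what justifies the strictness conclusion. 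The remaining pieces (collapse of $\binom{2d}{\ell_1,\dots,\ell_k}\prod\ell_i!$ to $(2d)!$, the moment $\EE[T^{2d}]=(k+2d-1)!/(k-1)!$ matching $\binom{k+2d-1}{2d}(2d)!$) are routine and correct.
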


\begin{proof}
  Let $Z_i$, $i=1,\dots,k$, be i.i.d.~exponential random variables with rate
  parameter $\lambda=1$. We shall use the well-known fact that
  $\EE[Z_i^{t}]=t!$. Let $S_{2d}(x_1,\dots,x_k):=(\sum_{i=1}^{k}
  x_iZ_i)^{2d}/(2d)!$. Then
\begin{equation}\label{eq:E[S]}
    \EE\big[S_{2d}(x_1,\dots,x_k)\big] = \EE\left[\sum_{\ell_1+\dots+\ell_k=2d}
    \prod_{i=1}^{k}\frac{x_i^{\ell_i}Z_i^{\ell_i}}{\ell_i!}\right] = h_{2d}(x_1,\dots,x_k).
\end{equation}

Let $S_{2d}^{+j}(x_1,\dots,x_{n}):=(\sum_{i=1}^{k} x_{i+j}Z_i)^{2d}/(2d)!$ be
the function obtained by a cyclic permutation of the variables in $S_{2d}$,
where the addition in the index of $x_{i+j}$ is taken modulo $k$. As 
\eqref{eq:E[S]} is symmetric in $x_1,\dots,x_n$,
we have $\EE[S_{2d}^{+j}]=\EE[S_{2d}]=h_{2d}$. By convexity of the
function $x \mapsto x^{2d}$,
\begin{align*}
   \frac{(2d)!}{k}\sum_{j=1}^{k} S_d^{+j}(x_1,\dots,x_k) 
   &= \frac{1}{k}\sum_{j=1}^{k} \left(\sum_{i=1}^{k} x_{i+j}Z_i\right)^{2d}\\
   &\geq \left(\frac{1}{k}\sum_{j=1}^{k}\sum_{i=1}^{k}x_{i+j}Z_i \right)^{2d}
   =\left(\sum_{i=1}^{k}\overbar{x}Z_i\right)^{2d} 
   = (2d)!S_{2d}(\overbrace{\overbar{x},\dots,\overbar{x}}^k).
\end{align*}
Taking expectation on both sides then concludes the proof.
\end{proof}

We are now ready to prove~\Cref{thm:main_ineq}.
\begin{proof}[Proof of~\Cref{thm:main_ineq}]
Let $\overbar{X}:=\frac{1}{k}\sum_{i=1}^{k}X_i$, where $k=m-2d+1$ and $X_i$'s are i.i.d.~copies of $X_U$ in~\Cref{lem:spec}. Then by
\Cref{lem:rv_interpret,lem:Schur},
\begin{align*}
q_{m,d}(U) = \EE[h_{2d}(X_1,\dots,X_{m-2d+1})] \geq
\EE\left[\binom{m}{2d}\overbar{X}^{2d}\right].
\end{align*}
By Jensen's inequality and
the fact $\EE[\overbar{X}]=\EE[X_i]=\EE[X_U]=t_{K_2}(U)$ from \Cref{lem:spec},
\begin{align*}
\EE\left[\binom{m}{2d}\overbar{X}^{2d}\right] 
\ge \binom{m}{2d} \EE[\overbar{X}]^{2d}
 =\binom{m}{2d} t_{K_2}(U)^{2d}.
\end{align*}
Combining the two inequalities then completes the proof.
\end{proof}

Without relying on~\Cref{thm:main}, one may also directly prove~\Cref{cor:path} by using the nonnegativity of complete homogeneous symmetric polynomials.
Namely, 
\begin{equation}\label{eq:hunter}
q_{m,d}(U)=\sum_{F\in\E_{2d}^+(P_m)} t_F(U) \geq 0
\end{equation}
for each kernel $U$ and $1\leq d\leq m/2$, a weaker inequality than~\Cref{thm:main_ineq}, is enough. 
The global nonnegativity of $h_{2d}(x_1,\dots,x_k)$, a classical result of Hunter~\cite{Hunter77} and also an easy consequence of~\Cref{lem:Schur}, together with~\Cref{lem:rv_interpret} therefore proves~\Cref{cor:path} directly.

\section{Stability}\label{sec:stability}

One advantage of our proofs in the previous sections is that they also give a stability analysis,
which has not been known for odd cycles $H$ other than the triangle in Sidorenko's theorem.
Roughly speaking, if the inequality in~\Cref{thm:main} is `close' to be an equality, then the graphon $W$ must be `almost' regular.
We begin by showing a stability result corresponding to \Cref{cor:path}.

\begin{theorem}\label{thm:st1}
Let $H$ be a path  with at least $2$ edges or a cycle and let $W$ be a kernel.
For any $\varepsilon\ge 0$, if 
\begin{equation}\label{eq:1+e}
t_{H}(W)+t_{H}(1-W)\leq 2^{1-e(H)}(1+\varepsilon),
\end{equation}
then
\begin{equation}\label{eq:P2}
t_{P_2}(2W-1)\leq \frac{\varepsilon}{e(H)-1}.
\end{equation}
\end{theorem}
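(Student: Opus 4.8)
The plan is to trace through the proof of \Cref{thm:main} and extract, at each inequality, the `deficiency' term, then show that a small total deficiency forces $t_{P_2}(2W-1)$ to be small. Set $U := 2W-1$, so that by~\eqref{eq:tH} and~\eqref{eq:tK} the hypothesis~\eqref{eq:1+e} is equivalent to
\begin{align*}
\sum_{F\in\E^+(H)}\big(t_F(U)-t_{K_2}(U)^{e(F)}\big)\leq \varepsilon.
\end{align*}
Since every summand is nonnegative by~\eqref{eq:nonneg}, it suffices to isolate a single well-chosen term $F$ and bound $t_{P_2}(U)$ by it (up to the factor $e(H)-1$). First I would reduce to the path case: when $H=C_{m}$ is a cycle, the decomposition~\eqref{eq:E+} writes the sum over $\E^+(C_m)$ as an average/combination of sums over $\E^+(C_m\setminus e)$ where $C_m\setminus e$ is a path; each such inner sum is nonnegative, so it is enough to prove the path statement and track how the constant $e(H)-1$ transforms. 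For a path $H=P_m$, I would then focus on the $d=1$ layer, i.e. on $q_{m,1}(U)-\binom{m}{2}t_{K_2}(U)^{2}$.

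The heart of the matter is a stable version of \Cref{lem:Schur} (equivalently, of Jensen) in the case $d=1$. Recall from the small-example computation that
\begin{align*}
q_{m,1}(U)=\EE\big[h_{2}(X_1,\dots,X_{m-1})\big],\qquad h_2(x_1,\dots,x_k)=\tfrac12\Big(\sum_i x_i\Big)^2+\tfrac12\sum_i x_i^2,
\end{align*}
so that $h_2(x_1,\dots,x_k)-\binom{k+1}{2}\overbar{x}^{\,2}=\tfrac12\sum_i (x_i-\overbar{x})^2$, an exact identity with no slack. Feeding in i.i.d.\ copies $X_i$ of $X_U$ and writing $k=m-1$, $\overbar X=\frac1k\sum_i X_i$, this gives
\begin{align*}
q_{m,1}(U)-\binom{m}{2}\EE[\overbar X^{\,2}]=\tfrac{k}{2}\,\EE\big[(X_1-\overbar X)^2\big]=\tfrac{k-1}{2}\,\big(\EE[X_1^2]-\EE[X_1]^2\big)=\tfrac{m-2}{2}\,\operatorname{Var}(X_U),
\end{align*}
using that the $X_i$ are i.i.d. Next, the Jensen step $\EE[\overbar X^{\,2}]\geq\EE[\overbar X]^2$ has deficiency exactly $\operatorname{Var}(\overbar X)=\operatorname{Var}(X_U)/k=\operatorname{Var}(X_U)/(m-1)$, so
\begin{align*}
q_{m,1}(U)-\binom{m}{2}t_{K_2}(U)^{2}=\Big(\tfrac{m-2}{2}+\tfrac{1}{2(m-1)}\binom{m}{2}\Big)\operatorname{Var}(X_U)=\tfrac{m-1}{1}\cdot\tfrac{m-2}{2}\cdot\text{(something)}\,;
\end{align*}
what I actually need is only the clean lower bound $q_{m,1}(U)-\binom{m}{2}t_{K_2}(U)^2\geq (m-1)\operatorname{Var}(X_U)/2$ — in fact the coefficient works out to exactly $\binom{m}{2}/(m-1)\cdot\operatorname{Var}(X_U)=\tfrac{m}{2}\operatorname{Var}(X_U)$, but any constant of the right order suffices for the stated bound. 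Finally, by \Cref{lem:spec}, $\operatorname{Var}(X_U)=\EE[X_U^2]-\EE[X_U]^2=t_{P_2}(U)-t_{K_2}(U)^2$, and since $t_{K_2}(U)^2\geq 0$ we get $t_{P_2}(U)\leq \operatorname{Var}(X_U)+t_{K_2}(U)^2$; but actually $t_{P_2}(U)=\sum_i\lambda_i^2 p_i\geq 0$ and the cleaner route is to note $t_{P_2}(U)=\EE[X_U^2]=\operatorname{Var}(X_U)+t_{K_2}(U)^2$, so I must also absorb the $t_{K_2}(U)^2$ term — this is handled because the full left side of~\eqref{eq:nonneg} also contains, when $m$ is even, higher-$d$ terms and the $t_{K_2}(U)^{e(F)}$ subtractions are already accounted for; more simply, the $d=1$ deficiency alone controls $\operatorname{Var}(X_U)$, and separately $t_{K_2}(U)^2$ is controlled because $t_{P_2}(U)\le \varepsilon$-type bounds will follow once we observe $t_{P_2}(U)\ge t_{K_2}(U)^2\ge 0$, hence $t_{P_2}(U)\le 2\operatorname{Var}(X_U)$ is false in general — so the correct statement to prove is precisely $\operatorname{Var}(X_U)\le \varepsilon/(e(H)-1)$ together with the observation that for the \emph{path} case one bounds $t_{P_2}(U)$ directly, while noting $t_{P_2}(2W-1)$ in~\eqref{eq:P2} may be negative, in which case there is nothing to prove.

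So the clean logical skeleton is: (i) reduce cycles to paths via~\eqref{eq:E+}, carefully recomputing the constant (the weights $1/(e(H)-2d)$ and the number of edges $e$ summed over conspire to give $e(H)-1$ in the denominator); (ii) for paths, discard all layers $d\geq 2$ by nonnegativity~\eqref{eq:nonneg}, keeping only $q_{m,1}(U)-\binom{m}{2}t_{K_2}(U)^2$; (iii) evaluate this quantity exactly via the identity $h_2(\mathbf x)-\binom{k+1}{2}\overbar x^{\,2}=\tfrac12\sum_i(x_i-\overbar x)^2$ and the i.i.d.\ structure, obtaining a positive multiple of $\operatorname{Var}(X_U)$ with the multiple at least $e(H)-1$; (iv) conclude $\operatorname{Var}(X_U)\le \varepsilon/(e(H)-1)$, and since $t_{P_2}(U)=\operatorname{Var}(X_U)+t_{K_2}(U)^2\ge t_{K_2}(U)^2\ge 0$, one still needs $t_{P_2}(U)\le \varepsilon/(e(H)-1)$, which means the exact coefficient in (iii) must in fact be $\ge e(H)-1$ when \emph{applied to $t_{P_2}(U)$ rather than $\operatorname{Var}$}; resolving this bookkeeping — i.e., checking that the combined $d=1$ deficiency plus the nonneg.\ of $t_{K_2}(U)^2$ gives a bound on $t_{P_2}(U)$ itself with constant exactly $e(H)-1$ — is the main obstacle, and I expect it to come out because the Jensen deficiency $\operatorname{Var}(\overbar X)$ and the Schur deficiency together rebuild exactly $\binom{m}{2}/(m-1)\cdot\big(t_{P_2}(U)-\text{(nothing)}\big)$; the only genuinely delicate point is getting the cycle-to-path constant transformation right so that $m$ (the number of edges of the path $H\setminus e$) turns into $e(H)$ (the number of edges of the cycle $H$) in the final bound.
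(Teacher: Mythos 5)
There is a genuine gap, and it occurs at the very first step. You claim that by \eqref{eq:tH} and \eqref{eq:tK} the hypothesis \eqref{eq:1+e} is equivalent to $\sum_{F\in\E^+(H)}\big(t_F(U)-t_{K_2}(U)^{e(F)}\big)\leq\varepsilon$. This is the reformulation of the hypothesis of \Cref{thm:st2} (the stability version of \Cref{thm:main}), not of \eqref{eq:1+e}. By \eqref{eq:tH} alone, \eqref{eq:1+e} is equivalent to the simpler and stronger statement
\begin{align*}
\sum_{F\in\E^+(H)} t_F(U)\leq\varepsilon,
\end{align*}
with no $t_{K_2}(U)^{e(F)}$ subtractions. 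Because you start from the weaker hypothesis, the deficiency calculation you carry out — which correctly yields $q_{m,1}(U)-\binom{m}{2}t_{K_2}(U)^{2}=(m-1)\operatorname{Var}(X_U)$ — can only control $\operatorname{Var}(X_U)=t_{P_2}(U)-t_{K_2}(U)^2$, not $t_{P_2}(U)$ itself. You acknowledge that you cannot close this gap (``resolving this bookkeeping \dots is the main obstacle, and I expect it to come out''), and indeed with your reformulation it does not come out: you have essentially proved \Cref{thm:st2} rather than \Cref{thm:st1}. Separately, the remark that $t_{P_2}(2W-1)$ might be negative is false; $t_{P_2}(U)=\int_0^1 d_U(y)^2\,dy\geq 0$ for every kernel $U$.

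Once the hypothesis is reformulated correctly, the argument is considerably shorter than your sketch and needs no stable Schur or Jensen deficiency at all. From $\sum_{F\in\E^+(P_m)}t_F(U)\leq\varepsilon$, Hunter nonnegativity \eqref{eq:hunter} lets you discard every layer $d\geq 2$ to obtain $q_{m,1}(U)\leq\varepsilon$, and the exact identity $q_{m,1}(U)=(m-1)t_{P_2}(U)+\binom{m-1}{2}t_{K_2}(U)^{2}$ gives $q_{m,1}(U)\geq (m-1)t_{P_2}(U)$ since $t_{K_2}(U)^2\geq 0$; that is \eqref{eq:P2} for paths. Your reduction step (i) for cycles via \eqref{eq:E+} is the right idea, and following the same one-term truncation one in fact gets the stronger constant $e(H)$ in the denominator for cycles. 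So the decomposition and the $d=1$ focus are on target, but the incorrect hypothesis reformulation sends the rest of the argument in the wrong direction.
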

\begin{proof}
Let $U=2W-1$.
First, suppose $H=P_m$. By \eqref{eq:tH} and \eqref{eq:hunter},
the assumption \eqref{eq:1+e} implies
\begin{align*}
\varepsilon \ge \sum_{F\in \E^+(P_m)} t_F(U) = \sum_{d=1}^{\lfloor m/2\rfloor}q_{m,d}(U)
\ge q_{m,1}(U)\geq (m-1)t_{P_2}(U),
\end{align*}
which gives \eqref{eq:P2}.
Indeed, $q_{m,1}(U)=(m-1)t_{P_2}(U)+\binom{m-1}{2}t_{K_2}(U)^2$ proves the last inequality.

Suppose now that $H=C_m$. 
By \eqref{eq:tH}, \eqref{eq:hunter},
and the argument for \eqref{eq:E+},
the assumption \eqref{eq:1+e} implies
\begin{align}\label{eq:stability}
\varepsilon \ge 
\sum_{F\in \E^+(C_m)} t_F(U) = \tau + \sum_{e\in E(C_m)}
\sum_{d=1}^{\lfloor(m-1)/2\rfloor} \frac{q_{m-1,d}(U)}{m-2d} 
\geq \frac{m q_{m-1,1}(U)}{m-2} \geq m\cdot t_{P_2}(U),
\end{align}
where $\tau=t_{C_m}(U)$ if $m$ is even and $\tau=0$ otherwise.
This proves \eqref{eq:P2}.
\end{proof}

\Cref{thm:st1} concludes that $t_{P_2}(2W-1)$ is `small' whenever the inequality in \Cref{cor:path} is close to be an equality.
To elaborate on the meaning of $t_{P_2}(2W-1)$ being small,
suppose that $W$ is the indicator graphon of an $n$-vertex graph $G$ and recall that $U=2W-1$.
As
\begin{align*}
 t_{P_2}(U) = \int_{[0,1]^3} U(x,y)U(y,z) dxdydz = \int_0^1 d_U(y)^2 dy,
\end{align*}
where $d_U(y):=\int_0^1 U(x,y) dx$, the inequality $t_{P_2}(U)\leq \varepsilon$ together with Markov's inequality gives
\begin{align*}
   \sqrt{\varepsilon}\cdot \PP \big[d_U(y)^2\geq \sqrt{\varepsilon}\big] \leq \int_0^1 d_U(y)^2 dy \leq \varepsilon.
\end{align*}
That is, 
all but $\sqrt{\varepsilon}n$ vertices in $G$ have degree
between $(1-\varepsilon^{1/4})n/2$ and $(1+\varepsilon^{1/4})n/2$.

If $H=C_m$ with $m$ even in \Cref{thm:st1}, the conclusion becomes even stronger.
Namely, instead of the lower bound $m\cdot t_{P_2}(U)$ in~\eqref{eq:stability},
one may use $\tau=t_{C_m}(U)$ to simply obtain $t_{C_{m}}(U)\leq \varepsilon$.
It is well-known, e.g.,~\cite{CGW89,L12}, that this implies $\|U\|_{\square}\leq \varepsilon^{1/m}$, where $\|.\|_{\square}$ is the \emph{cut norm}.
For the other cases, one cannot expect such a result, as the inequality in~\Cref{thm:main} attains the equality whenever $W$ is a `regular' graphon with density $1/2$, i.e., $d_W(x)=1/2$ almost everywhere. 

\medskip

An analogous stability result for~\Cref{thm:main} can also be obtained. 
\begin{theorem}\label{thm:st2}
Let $H$ be a path with at least $2$ edges or a cycle and let $W$ be a kernel.
For any $\varepsilon\ge 0$, if 
\begin{equation}\label{eq:2e}
t_{H}(W) +t_{H}(1-W) \le t_{K_2}(W)^{e(H)} +t_{K_2}(1-W)^{e(H)}+
2^{1-e(H)}\varepsilon.
\end{equation}
then for $U=2W-1$,
\begin{equation}\label{eq:P2K2}
   t_{P_2}(U)\leq t_{K_2}(U)^2 + \frac{\varepsilon}{e(H)-1}.
\end{equation}
\end{theorem}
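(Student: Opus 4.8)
The plan is to run the argument of~\Cref{thm:st1} essentially verbatim, the only difference being that instead of discarding the terms $t_{K_2}(U)^{e(F)}$ via nonnegativity of complete homogeneous symmetric polynomials, I keep them and compare against the corresponding terms produced by~\Cref{thm:main_ineq}. First I would set $U=2W-1$ and use the multilinear expansions~\eqref{eq:tH} and~\eqref{eq:tK}, which make the hypothesis~\eqref{eq:2e} equivalent to
\begin{align*}
\sum_{F\in\E^+(H)}\bigl(t_F(U)-t_{K_2}(U)^{e(F)}\bigr)\le\varepsilon.
\end{align*}
Everything then reduces to extracting from this sum the part that controls $t_{P_2}(U)-t_{K_2}(U)^2$.

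For a path $H=P_m$ I would use the grouping~\eqref{eq:Ed} together with~\Cref{thm:main_ineq}, which guarantees $q_{m,d}(U)-\binom{m}{2d}t_{K_2}(U)^{2d}\ge 0$ for every $d$. Dropping all $d\ge 2$ summands yields $q_{m,1}(U)-\binom{m}{2}t_{K_2}(U)^2\le\varepsilon$. Substituting the explicit value $q_{m,1}(U)=(m-1)t_{P_2}(U)+\binom{m-1}{2}t_{K_2}(U)^2$ and using $\binom{m}{2}-\binom{m-1}{2}=m-1$ collapses the $t_{K_2}(U)^2$ contributions and leaves $(m-1)\bigl(t_{P_2}(U)-t_{K_2}(U)^2\bigr)\le\varepsilon$, which is~\eqref{eq:P2K2} since $e(P_m)=m$.

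For a cycle $H=C_m$ I would instead invoke the decomposition~\eqref{eq:E+}, augmented by the extra term $t_{C_m}(U)-t_{K_2}(U)^m$ when $m$ is even. Every summand here is nonnegative: the inner terms $q_{m-1,d}(U)-\binom{m-1}{2d}t_{K_2}(U)^{2d}$ by~\Cref{thm:main_ineq}, the coefficients $1/(m-2d)$ being positive, and the extra even-cycle term by~\eqref{eq:even_cycle}. Keeping only the $d=1$ part from each of the $m$ choices of deleted edge gives $\tfrac{m}{m-2}\bigl(q_{m-1,1}(U)-\binom{m-1}{2}t_{K_2}(U)^2\bigr)\le\varepsilon$, and the same binomial identity (now $\binom{m-1}{2}-\binom{m-2}{2}=m-2$) reduces this to $m\bigl(t_{P_2}(U)-t_{K_2}(U)^2\bigr)\le\varepsilon$. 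Since $e(C_m)=m\ge e(H)-1$, this is in fact slightly stronger than~\eqref{eq:P2K2}.

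I do not expect a genuine obstacle beyond the bookkeeping with binomial coefficients; the one point that needs care is that the extra summand appearing in the even-cycle decomposition has the correct sign, and this is exactly what~\eqref{eq:even_cycle} (equivalently, that even cycles are norming) provides. The only structural difference from~\Cref{thm:st1} is that one can no longer afford to drop the $t_{K_2}(U)^{e(F)}$ terms term-by-term, so the comparison must be performed at the level of~\Cref{thm:main_ineq} rather than the weaker nonnegativity~\eqref{eq:hunter}.
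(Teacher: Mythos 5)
Your proposal is correct and follows the paper's proof essentially verbatim: both reduce via the expansions \eqref{eq:tH} and \eqref{eq:tK} to bounding $\sum_{F\in\E^+(H)}(t_F(U)-t_{K_2}(U)^{e(F)})$, keep only the $d=1$ summand after invoking \Cref{thm:main_ineq} (and \eqref{eq:even_cycle} for the extra term when $H$ is an even cycle), and collapse the binomial coefficients to reach $(e(H)-1)\bigl(t_{P_2}(U)-t_{K_2}(U)^2\bigr)\le\varepsilon$ for paths, with the slightly stronger constant $m$ for cycles. The only cosmetic difference is that you spell out the path case explicitly, whereas the paper treats $H=C_m$ and declares $H=P_m$ analogous.
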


\begin{proof}
Suppose that $H=C_m$. 
By \eqref{eq:tH}, \eqref{eq:tK}, \eqref{eq:E+}, and \Cref{thm:main_ineq},
the assumption \eqref{eq:2e} implies
\begin{align*}
\varepsilon \ge  \sum_{F\in \E^+(H)} \big(t_F(U)-t_{K_2}(U)^{e(F)}\big)
   &\geq \frac{m}{m-2}\left(q_{m-1,1}(U) - \binom{m-1}{2}t_{K_2}(U)^2\right) \\
   &= m\left(t_{P_2}(U) - t_{K_2}(U)^2\right), \notag
\end{align*}
which gives \eqref{eq:P2K2}.
The case $H=P_m$ follows in an analogous way.
\end{proof}

The inequality \eqref{eq:P2K2} again implies that $W$ is `almost' regular with respect to the edge density $t_{K_2}(W)$ instead of $1/2$, as $t_{P_2}(U)-t_{K_2}(U)^2$ translates to the variance of $d_U$.
That is, if $W$ is the indicator graphon of an $n$-vertex graph $G$,
then all but $\sqrt{\varepsilon} n$ vertices of $G$ have degree 
between $(p-\varepsilon^{1/4})n$ and
$(p+\varepsilon^{1/4})n$, where $p=t_{K_2}(W)$.
If $H=C_m$ with $m$ even, then we have a stronger conclusion $\|U-t_{K_2}(U)\|_{\square}\leq \varepsilon^{1/2m}$, i.e., $U$ is $\varepsilon^{1/2m}$-close to be quasirandom.

\vspace{5mm}

\noindent\textbf{Acknowledgements.} 
The first author is supported by
the National Research Foundation of Korea (NRF) grants \#2022R1A2C101100911 and \#2016R1A5A1008055.
The second author is supported by the NRF grant  \#2022R1C1C1010300, by Samsung STF Grant SSTF-BA2201-02, and by IBS-R029-C4.
The authors are grateful to David Conlon and Jan Volec for helpful discussions,
to Jozef Skokan for bringing~\cite{BMN22} to their attention,
and to Apoorva Khare for providing references relevant to Schur convexity.

\bibliographystyle{plainurl}
\bibliography{references}

\end{document}